\newtheorem{theorem}{Theorem}[section]
\newtheorem{lemma}[theorem]{Lemma}
\newtheorem{proposition}[theorem]{Proposition}
\theoremstyle{definition}
\theoremstyle{remark}
\newtheorem{remark}[theorem]{Remark}
\numberwithin{equation}{section}
\begin{document}

\title{Borel equivalence relations between $\ell_1$ and $\ell_p$}
\author{Longyun Ding}
\address{School of Mathematical Sciences and LPMC, Nankai University, Tianjin, 300071, P.R.China}
\email{dinglongyun@gmail.com}
\thanks{Research partially supported by the National Natural Science Foundation of China (Grant No. 10701044) and Program for New Century Excellent Talents in University.}
\author{Zhi Yin}
\email{will.yin@hotmail.com}

\subjclass[2000]{Primary 03E15, 46A45}

\date{\today}

\begin{abstract}
In this paper, we show that, for each $p>1$, there
are continuum many Borel equivalence relations between $\Bbb R^\omega/\ell_1$ and $\Bbb R^\omega/\ell_p$ ordered by $\le_B$ which are pairwise Borel incomparable.
\end{abstract}
\maketitle

\section{Introduction}

A {\it Polish space} is a topological space that admits a compatible complete separable metric. For more details in descriptive set theory, one can see \cite{kechris}.
Let $X,Y$ be Polish spaces, $E,F$ equivalence relations on $X,Y$ respectively, we say $E$ is {\it Borel reducible} to $F$, denoted by $E\le_B F$, if there exists a
Borel function $\theta:X\to Y$ satisfying
$$xE\hat x\iff\theta(x)F\theta(\hat x).$$
We say $E$ is {\it strictly Borel reducible} to $F$, $E<_B F$ in notation, if $E\le_B F$ but $F\not\le_B E$. We refer to \cite{gaobook} for background on Borel reducibilty.

R. Dougherty and G. Hjorth \cite{DH} proved that, for $p,q\ge 1$,
$$\Bbb R^\omega/\ell_p\le_B\Bbb R^\omega/\ell_q\iff p\le q.$$
A question of S. Gao in \cite{gao} asking whether $\Bbb R^\omega/\ell_p$ is the greatest lowest bound of $\{\Bbb R^\omega/\ell_q:p<q\}$. T. M\' atrai answer this question
in the negative by showing, for $1\le p<q$, every linear order which embeds into $(P(\omega)/{\rm fin},\subset)$ also embeds into the set of equivalence relations
between $\Bbb R^\omega/\ell_p$ and $\Bbb R^\omega/\ell_q$ ordered by $<_B$ (see \cite{matrai}, Corollary 31).

We can see that all equivalence relations considered in M\' arai's paper \cite{matrai}
are pairwise Borel comparable. A question arises naturally that, for $1\le p<q$, whether there are equivalence relations $E,F$ such that
$\Bbb R^\omega/\ell_p\le_B E,F\le_B\Bbb R/\ell_q$ but $E,F$ are incomparable. Both Gao and M\' atrai asked this question in the special case $p=1,q=2$.
In this paper, we show that, for each $p>1$, there are continuum many pairwise Borel incomparable equivalence relations between $\Bbb R^\omega/\ell_1$
and $\Bbb R^\omega/\ell_p$.

\section{Some notes on {\bf E}$_f$}

We denote by $\Bbb R^+$ the set of nonnegative real numbers. Let $f:[0,1]\to\Bbb R^+$. M\' atrai \cite{matrai} defined the relation {\bf E}$_f$ on $[0,1]^\omega$ by setting,
for every $(x_n)_{n<\omega},(y_n)_{n<\omega}\in[0,1]^\omega$,
$$(x_n){\textbf E}_f(y_n)\iff\sum_{n<\omega}f(|y_n-x_n|)<\infty.$$
It is straightforward that {\bf E}$_f$ is a Borel relation whenever $f$ is Borel.

The following proposition answers when {\bf E}$_f$ is an equivalence relation.

\begin{proposition}[M\' atrai \cite{matrai}, Proposition 2] \label{equivalence}
Let $f:[0,1]\to\Bbb R^+$ be a bounded function. Then {\bf E}$_f$ is an equivalence relation iff the following conditions hold:
\begin{enumerate}
\item[(R$_1$)] $f(0)=0;$
\item[(R$_2$)] there is a $C\ge 1$ such that for every $x,y\in[0,1]$ with $x+y\in[0,1]$,
$$f(x+y)\le C(f(x)+f(y)),$$
$$f(x)\le C(f(x+y)+f(y)).$$
\end{enumerate}
\end{proposition}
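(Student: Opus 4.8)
The plan is to prove both implications; the forward implication (the two conditions imply that ${\bf E}_f$ is an equivalence relation) is mostly routine, while the converse requires a construction. For the forward direction, assume (R$_1$) and (R$_2$) hold with constant $C$. Reflexivity of ${\bf E}_f$ is immediate from (R$_1$) since $\sum_n f(|x_n-x_n|)=\sum_n f(0)=0$, and symmetry is trivial because $|y_n-x_n|=|x_n-y_n|$. The only real content is transitivity, and for this I would first establish the pointwise ``quasi-triangle'' inequality
\[ f(|u-w|)\le C\bigl(f(|u-v|)+f(|v-w|)\bigr)\qquad\text{for all }u,v,w\in[0,1]. \]
This follows from (R$_2$) by a case analysis on which of $u,v,w$ is the median of the three: if $v$ lies between $u$ and $w$, then $|u-w|=|u-v|+|v-w|\in[0,1]$ and the first inequality of (R$_2$) applies directly with $x=|u-v|$, $y=|v-w|$; if instead $u$ (resp.\ $w$) is the median, then $|v-w|=|u-v|+|u-w|\in[0,1]$ (resp.\ $|u-v|=|u-w|+|w-v|\in[0,1]$) and the second inequality of (R$_2$) applies with $x=|u-w|$. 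Granting this, whenever $(x_n){\bf E}_f(y_n)$ and $(y_n){\bf E}_f(z_n)$ we get $\sum_n f(|x_n-z_n|)\le C\sum_n f(|x_n-y_n|)+C\sum_n f(|y_n-z_n|)<\infty$, i.e.\ $(x_n){\bf E}_f(z_n)$.

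For the converse, assume ${\bf E}_f$ is an equivalence relation. Applying reflexivity to a constant sequence gives $\sum_n f(0)<\infty$, hence $f(0)=0$, which is (R$_1$). For (R$_2$) I would argue by contradiction. If (R$_2$) fails for every $C\ge 1$, then for each $n\in\N$ there is a pair witnessing the failure at $C=n$, and for each such pair at least one of the two displayed inequalities fails; by pigeonhole there is an infinite set of indices for which the \emph{same} one of the two inequalities fails. In the first case this yields, for each such $n$, points $u_n,v_n,w_n\in[0,1]$ (namely $0$, $x$, $x+y$ with $f(x+y)>n(f(x)+f(y))$) satisfying $f(|u_n-w_n|)>n\bigl(f(|u_n-v_n|)+f(|v_n-w_n|)\bigr)$; in the second case ($f(x)>n(f(x+y)+f(y))$) the same conclusion holds after re-reading the triple as $(0,\,x+y,\,x)$. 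Relabelling, we may assume such a triple exists for every $n$.

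Now set $\delta_n=f(|u_n-v_n|)+f(|v_n-w_n|)$. If $\delta_n=0$ for some $n$, then the constant sequences $(u_n),(v_n),(w_n)\in[0,1]^\omega$ already violate transitivity, so we may assume $\delta_n>0$ for all $n$; since $f$ is bounded, $\delta_n< \norm{f}_\infty/n\to 0$. Passing to a subsequence and relabelling, we may assume $\delta_n\le 1/n^2$ for all $n$ while still $f(|u_n-w_n|)>n\delta_n$ (the multiplier along the subsequence dominates the new index). Let $m_n$ be the least positive integer with $m_n\delta_n\ge 1/n^2$, so $m_n\ge 1$ and $m_n\delta_n\le 1/n^2+\delta_n\le 2/n^2$. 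Build $u,v,w\in[0,1]^\omega$ by concatenating, for each $n$, a block of length $m_n$ in which all three coordinates carry the triple $(u_n,v_n,w_n)$. Then $\sum_k f(|u_k-v_k|)\le\sum_n m_n\delta_n<\infty$ and likewise $\sum_k f(|v_k-w_k|)<\infty$, so $(u_k){\bf E}_f(v_k)$ and $(v_k){\bf E}_f(w_k)$; but $\sum_k f(|u_k-w_k|)=\sum_n m_n f(|u_n-w_n|)>\sum_n n\,m_n\delta_n\ge\sum_n n\cdot n^{-2}=\infty$, so $(u_k){\bf E}_f(w_k)$ fails. This contradicts transitivity, so (R$_2$) holds for some $C$, which we may take $\ge 1$.

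I expect the main obstacle to be the bookkeeping in the converse: arranging simultaneously that the ``short'' differences sum to a finite value and the ``long'' differences sum to infinity. This forces the passage to a subsequence along which $\delta_n$ is small enough that the multiplicities $m_n$ can be taken $\ge 1$, together with a suitable target rate (here $1/n^2$; any sequence summable but not summable after multiplication by $n$ would serve). The secondary point requiring care is the case split handling the disjunction in the negation of (R$_2$) — in particular, recognizing that a failure of the second inequality produces the same kind of ``bad triple'' after permuting the three points, so that both halves of (R$_2$) are forced by the single transitivity hypothesis.
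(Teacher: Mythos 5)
Your proof is correct. Note that the paper itself gives no proof of this proposition --- it is imported verbatim from M\'atrai's paper as a black box --- so there is nothing internal to compare against; but your argument is complete and is the standard one: the pointwise quasi-triangle inequality $f(|u-w|)\le C(f(|u-v|)+f(|v-w|))$ obtained from (R$_2$) by casing on the median of $u,v,w$ gives transitivity, and the converse is the usual block-concatenation counterexample, with the multiplicities $m_n$ tuned so that $\sum_n m_n\delta_n<\infty$ while $\sum_n m_n\cdot n\delta_n=\infty$. Two cosmetic remarks: the pigeonhole step is unnecessary, since both halves of the disjunction in the negation of (R$_2$) yield a bad triple of the same form for every $n$ (as you in fact observe immediately afterwards); and the handling of the degenerate case $\delta_n=0$ via constant sequences is exactly right and should not be omitted, since the normalization by $\delta_n$ breaks down there.
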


A nonreducibility result was obtained in \cite{matrai} for a class of {\bf E}$_f$'s as follows.

\begin{theorem}[M\' atrai \cite{matrai}, Theorem 18] \label{inreduce}
Let $1\le\alpha<\infty$ and let $\varphi,\psi:[0,1]\to[0,+\infty)$ be continuous. Set $f(x)=x^\alpha\varphi(x),g(x)=x^\alpha\psi(x)$ for $x\in[0,1]$
and suppose that $f,g$ are bounded and {\bf E}$_f$ and {\bf E}$_g$ are equivalence relations. Suppose $\psi(x)>0\,(x>0)$, and
\begin{enumerate}
\item[(A$_1$)] there exist $\varepsilon>0,M<\omega$ such that for every $n>M$ and $x,y\in[0,1]$,
$$\varphi(x)\le\varepsilon\varphi(y)\varphi(1/2^n)\Rightarrow x\le\frac{y}{2^{n+1}};$$
\item[(A$_2$)] $\lim_{n\to\infty}\psi(1/2^n)/\varphi(1/2^n)=0$.
\end{enumerate}
Then {\bf E}$_g\not\le_B${\bf E}$_f$.
\end{theorem}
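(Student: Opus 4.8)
The plan is to argue by contradiction, so suppose $\theta\colon[0,1]^\omega\to[0,1]^\omega$ is Borel and satisfies $x\,\mathbf{E}_g\,x'\iff\theta(x)\,\mathbf{E}_f\,\theta(x')$ for all $x,x'$. Since $\theta$ is Borel it is continuous on a dense $G_\delta$ set, and I first pass to a convenient ``continuous'' situation: using that the small-$g$-mass finitely supported perturbations of a point stay inside its $\mathbf{E}_g$-class and, in the natural finer Polish topology on that class, form a rich dense family of admissible moves, a standard interleaving of Banach--Mazur-style requirements produces a point $x$ and a comeager set $P\ni x$ on which $\theta$ is continuous, so that the recursion described below can be made to keep every partial point inside $P$. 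The goal is then to build a single $x^\ast$ for which the biconditional $x^\ast\,\mathbf{E}_g\,x\iff\theta(x^\ast)\,\mathbf{E}_f\,\theta(x)$ fails, contradicting that $\theta$ is a reduction.

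Two quantitative ingredients drive the recursion. The engine comes from (A$_2$): since $\psi(1/2^n)/\varphi(1/2^n)\to 0$, I may choose $n_0<n_1<\cdots$ with $\psi(1/2^{n_k})/\varphi(1/2^{n_k})\le 4^{-k}$ and then integers $N_k$ with
\[
N_k\cdot 2^{-\alpha n_k}\psi(1/2^{n_k})\le 2^{-k}\qquad\text{and}\qquad N_k\cdot 2^{-\alpha n_k}\varphi(1/2^{n_k})\ge 1,
\]
which is possible precisely because the gap between the two constraints is a factor at least $2^k$. Thus a ``basic move'' that changes $N_k$ fresh coordinates of a point by $\pm 1/2^{n_k}$ costs $g$-mass at most $2^{-k}$ but $f$-mass at least $1$, so such moves are cheap in the $\mathbf{E}_g$-world yet, accumulated over infinitely many stages, force divergence in the $\mathbf{E}_f$-world. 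The role of (A$_1$) is the complementary control: because $f(t)=t^\alpha\varphi(t)$ with $\varphi$ bounded and $\alpha\ge 1$, a family of image displacements that is ``exponentially small at scale $1/2^m$'' carries only $f$-mass $\le C2^{-\alpha m}\to 0$, while, by the contrapositive of (A$_1$), an image displacement that is \emph{not} exponentially small at its scale is forced to carry a definite positive amount of $f$-mass. Together with the fact that continuity of $\theta$ at the (in-$P$) intermediate points ensures that perturbing a point at sufficiently high coordinates changes its image only negligibly on any prescribed finite initial block, this means that $f$-mass once placed on a finite block of image coordinates persists there up to an arbitrarily small preassigned error, so that lower bounds on $\sum_{n<N}f(|\theta(x^{(k)})_n-\theta(x)_n|)$ pass to the limit.

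The fusion itself builds $x=x^{(0)},x^{(1)},\dots$ by admissible moves at coordinate blocks tending to infinity, with limit $x^\ast$, and at each stage $k$ runs a dichotomy whose exact thresholds are tuned to the constants $N_k,n_k$ of (A$_2$) and $\varepsilon,M$ of (A$_1$). \emph{Either} there is an admissible basic move — exploiting the large freedom in which fresh coordinates and which signs to use — whose effect on $\theta$ deposits a fixed positive amount of $f$-mass onto a fresh block of image coordinates; then I perform it and proceed toward an $x^\ast$ with $x^\ast\,\mathbf{E}_g\,x$ (the $g$-masses remain summable) but $\sum_n f(|\theta(x^\ast)_n-\theta(x)_n|)=\infty$, so $\theta(x^\ast)$ and $\theta(x)$ are $\mathbf{E}_f$-inequivalent. \emph{Or} every admissible move displaces the image only by amounts that are exponentially small at their scale, hence essentially invisible to $\mathbf{E}_f$; then I instead perform enough such moves to drive the $g$-distance of $x^\ast$ from $x$ to infinity while keeping $\sum_n f(|\theta(x^\ast)_n-\theta(x)_n|)$ finite, so that $x^\ast$ and $x$ are $\mathbf{E}_g$-inequivalent but $\theta(x^\ast)\,\mathbf{E}_f\,\theta(x)$. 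Either outcome contradicts that $\theta$ is a reduction, which finishes the proof.

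I expect the main obstacle to be making this dichotomy quantitatively coherent and the invisible branch robust. One must choose, compatibly and stage by stage, the exponential-smallness scales $m_k$, the frozen initial segments, and the per-stage $g$- and $f$-budgets so that: (i) in the first branch the deposited $f$-mass genuinely survives all later perturbations and accumulates to infinity; (ii) in the second branch the scales $m_k$ are pushed out fast enough that the per-stage image $f$-mass — amplified by the quasi-triangle constant $C$ of (R$_2$) as one passes through infinitely many intermediate points — still sums to a finite total; and (iii) the fixed constants $\varepsilon,M$ furnished by (A$_1$) really do line up with the geometric scales $1/2^{n_k}$ that the engine produces. Once these estimates are matched, the remainder is a routine Baire-category fusion.
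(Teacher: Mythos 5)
First, a point of reference: the paper does not prove this statement at all --- it is quoted verbatim as Theorem 18 of M\'atrai \cite{matrai} and used as a black box (only the weakening of (A$_2$) to (A$_2$)' in Remark \ref{incomparable} is commented on). So your proposal can only be measured against the strategy of the cited proof, which is indeed a Baire-category fusion in the style of Dougherty--Hjorth: your overall architecture (pass to a comeager set of continuity, use (A$_2$) to manufacture moves that are cheap for $g$ in the domain, and run a dichotomy whose two branches violate the two directions of the reduction) is the right shape.

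However, as written the proposal has genuine gaps, and you have in effect flagged them yourself in the last paragraph. The central one is that your dichotomy is not exhaustive: ``some admissible move deposits a fixed positive amount of $f$-mass on a fresh image block'' and ``every admissible move displaces the image by amounts exponentially small at their scale'' do not cover the intermediate case of moves depositing small but non-negligible $f$-mass, and closing this gap is precisely where (A$_1$) must be used quantitatively. Your reading of (A$_1$) is also off: its contrapositive says that $x>y/2^{n+1}$ forces $\varphi(x)>\varepsilon\varphi(y)\varphi(1/2^n)$, a \emph{multiplicative} comparison of $\varphi(x)$ against a reference displacement $y$ and a scale $1/2^n$, not an absolute lower bound on the $f$-mass of a ``non-exponentially-small'' displacement; its actual role is the converse direction, converting an upper bound on $\varphi(x)$ into an upper bound on $x$ itself and hence a summable upper bound on $f(x)=x^\alpha\varphi(x)$. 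Second, in the ``invisible'' branch you acknowledge that comparing $\theta(x^\ast)$ to $\theta(x)$ through infinitely many intermediate points multiplies the quasi-triangle constant $C$ of (R$_2$) at every stage; since $C^k\to\infty$, the finiteness of $\sum_n f(|\theta(x^\ast)_n-\theta(x)_n|)$ does not follow without a specific summation scheme (e.g.\ stabilizing disjoint finite blocks of image coordinates and estimating each block against $\theta(x^{(k)})$ for a single $k$, so that $C$ is applied only a bounded number of times per coordinate). Third, the assertion that the recursion ``can be made to keep every partial point inside $P$'' together with the limit $x^\ast$ is exactly the Kuratowski--Ulam bookkeeping that these arguments must carry out explicitly; asserting it does not discharge it. Until the dichotomy is made exhaustive via a correct use of (A$_1$) and the second branch is made summable, this is an outline of the known strategy rather than a proof.
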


\begin{remark} \label{incomparable}
We may replace condition (A$_2$) in the theorem by
\begin{enumerate}
\item[(A$_2$)'] $\liminf_{n\to\infty}\psi(1/2^n)/\varphi(1/2^n)=0$.
\end{enumerate}
In fact, we can check that the proof for Theorem 18 of \cite{matrai} is still valid under condition (A$_2$)'.
In this paper, condition (A$_2$)' is the key to prove incomparability between equivalence relations.
\end{remark}

Mostly, we focus on equivalence relations {\bf E}$_f$ in which $f(x)=x^\alpha\varphi(x)$ for $x\in[0,1]$ with $\varphi$ continuous increasing.

\begin{lemma} \label{varphi}
Let $\alpha\ge 1$ and $\varphi:[0,1]\to[0,\infty)$ be an increasing function with $\varphi(1/2)>0$. Set $f(x)=x^\alpha\varphi(x)$ for $x\in[0,1]$.
If there exists $\delta>0$ such that, for each $n>1$,
$$\varphi(1/2^n)\ge\max_{1\le i\le n-1}\{\delta\varphi(1/2^i)\varphi(1/2^{n-i})\},$$
then {\bf E}$_f$ is an equivalence relation and condition (A$_1$) in Theorem \ref{inreduce} holds.
\end{lemma}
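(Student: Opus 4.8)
The plan is to obtain that {\bf E}$_f$ is an equivalence relation from Proposition \ref{equivalence} (i.e.\ to verify (R$_1$) and (R$_2$)), and then to verify condition (A$_1$) of Theorem \ref{inreduce} directly. Two of the three tasks are essentially free: $f(0)=0^\alpha\varphi(0)=0$ since $\alpha\ge1$, which is (R$_1$); $f$ is bounded (by $\varphi(1)<\infty$), as required by Proposition \ref{equivalence}; and the inequality $f(x)\le C(f(x+y)+f(y))$ in (R$_2$) is immediate from the fact that $f$, being a product of two nonnegative increasing functions, is increasing, so $f(x)\le f(x+y)$ whenever $x\le x+y$. Thus the whole content of the lemma lies in the remaining half of (R$_2$), namely $f(x+y)\le C(f(x)+f(y))$, and in (A$_1$); and both of these will follow from a single doubling-type estimate for $\varphi$.

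First I would extract elementary consequences of the hypothesis. Taking $i=1$ gives $\varphi(1/2^n)\ge\delta\varphi(1/2)\varphi(1/2^{n-1})$ for $n>1$; since $\varphi(1/2)>0$, induction yields $\varphi(1/2^n)>0$ for every $n\ge1$, and monotonicity then gives $\varphi(t)>0$ for every $t\in(0,1]$ — a fact needed later to dispose of degenerate cases. Rewriting the same inequality as $\varphi(1/2^{n-1})\le(\delta\varphi(1/2))^{-1}\varphi(1/2^n)$ for $n\ge2$, and handling $n=1$ separately, set $L=\max\{\varphi(1)/\varphi(1/2),\,(\delta\varphi(1/2))^{-1}\}\ge1$, so that $\varphi(1/2^{n-1})\le L\varphi(1/2^n)$ for all $n\ge1$. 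Then for any $t\in(0,1/2]$, choosing $n\ge1$ with $1/2^{n+1}<t\le1/2^n$ and using monotonicity together with two applications of the last bound gives the key estimate $\varphi(2t)\le\varphi(1/2^{n-1})\le L^2\varphi(1/2^{n+1})\le L^2\varphi(t)$. Pinning this estimate down cleanly — and in particular keeping control near the endpoints $t=0$ (where $\varphi$ may vanish) and $t=1$ — is the step I expect to be the main obstacle; everything after it is bookkeeping.

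With the doubling estimate in hand, (R$_2$) reduces to a short case split. Given $x,y$ with $x+y\le1$, assume $x\le y$. If $y\le1/2$ then $x+y\le2y\le1$, so $f(x+y)=(x+y)^\alpha\varphi(x+y)\le(2y)^\alpha\varphi(2y)\le2^\alpha L^2 f(y)$; if $y>1/2$ then $f(x+y)\le\varphi(1)\le2^\alpha(\varphi(1)/\varphi(1/2))f(y)\le2^\alpha L^2 f(y)$. In either case $f(x+y)\le Cf(y)\le C(f(x)+f(y))$ with $C=2^\alpha L^2\ge1$, so Proposition \ref{equivalence} applies and {\bf E}$_f$ is an equivalence relation.

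For (A$_1$) I would argue by contraposition with $\varepsilon=\delta/(2L^2)$ and $M=1$. Suppose $n>1$, $x,y\in[0,1]$ and $x>y/2^{n+1}$. If $y=0$ then $x>0$, hence $\varphi(x)>0=\varepsilon\varphi(y)\varphi(1/2^n)$. If $y>0$, choose $k\ge0$ with $1/2^{k+1}<y\le1/2^k$; then $x>1/2^{k+n+2}$, so by monotonicity, the hypothesis applied with the split $k+n+2=(k+1)+(n+1)$, and the bound $\varphi(1/2^{j-1})\le L\varphi(1/2^j)$ used for $j=k+1$ and $j=n+1$,
$$\varphi(x)\ge\varphi(1/2^{k+n+2})\ge\delta\,\varphi(1/2^{k+1})\varphi(1/2^{n+1})\ge\frac{\delta}{L^2}\,\varphi(1/2^k)\varphi(1/2^n)\ge\frac{\delta}{L^2}\,\varphi(y)\varphi(1/2^n),$$
which strictly exceeds $\varepsilon\varphi(y)\varphi(1/2^n)$ since $\varphi(y)\varphi(1/2^n)>0$. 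This is precisely the contrapositive of (A$_1$), which completes the plan.
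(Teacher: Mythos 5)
Your proposal is correct and takes essentially the same route as the paper: both verify (R$_1$), (R$_2$) and (A$_1$) by locating $x,y$ in dyadic intervals and exploiting the hypothesis through the single consequence $\varphi(1/2^{n-1})\le L\varphi(1/2^n)$ (your doubling constant $L$ plays the role of the paper's inlined $\delta\varphi(1/4)$ and $\delta^2\varphi(1/4)$ estimates), so isolating the doubling inequality is a tidier organization rather than a different argument. The one loose point is the $y=0$ case of (A$_1$), where you assert $\varepsilon\varphi(0)\varphi(1/2^n)=0$ although nothing in the hypotheses forces $\varphi(0)=0$; this is harmless because the $n=2$ instance of the hypothesis gives $\delta\varphi(1/2)\le 1$, hence $\varepsilon\varphi(1)=\delta\varphi(1)/(2L^2)\le 1/2<1$ and so $\varepsilon\varphi(0)\varphi(1/2^n)<\varphi(0)\le\varphi(x)$ in any case.
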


\begin{proof}
Note that for $n>1$ we have $\varphi(1/2^n)\ge\delta\varphi(1/2)\varphi(1/2^{n-1})$. Since $\varphi(1/2)>0$ and $\varphi$ is increasing, we have $\varphi(x)>0$ for $x>0$.
By Proposition \ref{equivalence} and Theorem \ref{inreduce}, we need only to check (R$_1$), (R$_2$), and (A$_1$).

For (R$_1$), $f(0)=0$ is trivial.

For (R$_2$), let $x,y\in[0,1]$ with $x+y\in[0,1]$. Without loss of generality, we can assume that $x\ge y>0$. Since $f(x)=x^\alpha\varphi(x)$ is increasing, we have
$$f(x)\le f(x+y)\le(f(x+y)+f(y)).$$

If $x>1/4$, then
$$f(x+y)\le f(1)=\frac{4^\alpha\varphi(1)}{\varphi(1/4)}f(1/4)\le\frac{4^\alpha\varphi(1)}{\varphi(1/4)}(f(x)+f(y)).$$

If $x\le 1/4$, let $x\in(1/2^{n+1},1/2^n]$ with $n>1$. Then
$$f(x+y)\le f(2x)\le f(1/2^{n-1})=\frac{1}{2^{(n-1)\alpha}}\varphi(1/2^{n-1}),$$
$$f(x)\ge f(1/2^{n+1})=\frac{1}{2^{(n+1)\alpha}}\varphi(1/2^{n+1})\ge\frac{\delta}{2^{(n+1)\alpha}}\varphi(1/4)\varphi(1/2^{n-1}).$$
Thus we have
$$f(x+y)\le\frac{4^\alpha}{\delta\varphi(1/4)}(f(x)+f(y)).$$
Therefore, $C=\max\left\{1,\frac{4^\alpha\varphi(1)}{\varphi(1/4)},\frac{4^\alpha}{\delta\varphi(1/4)}\right\}$ witnesses that (R$_2$) holds.

For (A$_1$), fix a $0<\varepsilon<\min\{1/\varphi(1),\delta\varphi(1/4)/\varphi(1),\delta^2\varphi(1/4)\}$.
For $x,y\in[0,1]$ and $n>0$, assume for contradiction that
$$\varphi(x)\le\varepsilon\varphi(y)\varphi(1/2^n),\mbox{ but }x>\frac{y}{2^{n+1}}.$$

If $y=0$, since $\varphi(x)\le\varepsilon\varphi(0)\varphi(1/2^n)\le\varphi(0)$, we have $x=0$. It contradict to $x>\frac{y}{2^{n+1}}$.

If $y>0$, let $y\in(1/2^{m+1},1/2^m]$ for some $m\in\omega$, then $x>1/2^{m+n+2}$. If $m=0$, we have
$$\varepsilon\varphi(1)\varphi(1/2^n)\ge\varphi(x)\ge\varphi(1/2^{n+2})\ge\delta\varphi(1/4)\varphi(1/2^n),$$
contradicting $\varepsilon<\delta\varphi(1/4)/\varphi(1)$. If $m\ge 1$, we have
$$\begin{array}{ll}\varepsilon\varphi(1/2^m)\varphi(1/2^n)&\ge\varphi(x)\ge\varphi(1/2^{m+n+2})\cr
&\ge\delta\varphi(1/2^{m+2})\varphi(1/2^n)\cr
&\ge\delta^2\varphi(1/4)\varphi(1/2^m)\varphi(1/2^n),\end{array}$$
contradicting $\varepsilon<\delta^2\varphi(1/4)$.
\end{proof}

\section{pairwise incomparable equivalence relations}

From Lemma \ref{varphi}, we can define $\varphi$ from a decreasing sequence $(u_n)_{n<\omega}$ by setting $\varphi(1/2^n)=u_n$ and then extend $\varphi$ to $[0,1]$ to be
a continuous increasing function which is affine on each $[1/2^{n+1},1/2^n]$.

\begin{lemma} \label{un}
Let $0<\delta,\lambda<1$ and $u_0=u_1=1$. For $n>1$, suppose that $u_n=u_{n-1}$ or $u_n=\lambda u_{n-1}+(1-\lambda)\max_{1\le i\le n-1}\{\delta u_iu_{n-i}\}$.
Then we have, for each $n>1$,
$$u_{n-1}\ge u_n\ge\max_{1\le i\le n-1}\{\delta u_iu_{n-i}\}.$$
\end{lemma}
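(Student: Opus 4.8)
The plan is to prove the two inequalities $u_{n-1}\ge u_n$ and $u_n\ge\max_{1\le i\le n-1}\{\delta u_iu_{n-i}\}$ simultaneously, by strong induction on $n$. Throughout write $M_n=\max_{1\le i\le n-1}\{\delta u_iu_{n-i}\}$, so that the recursion reads ``$u_n=u_{n-1}$ or $u_n=\lambda u_{n-1}+(1-\lambda)M_n$'' and the target at stage $n$ is $u_{n-1}\ge u_n\ge M_n$. The first thing I would isolate is the auxiliary inequality $u_{n-1}\ge M_n$: once this is available, both branches of the recursion close at once. If $u_n=u_{n-1}$ there is nothing to prove on the left, and $u_n=u_{n-1}\ge M_n$ on the right. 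If $u_n=\lambda u_{n-1}+(1-\lambda)M_n$, then $u_n-u_{n-1}=(1-\lambda)(M_n-u_{n-1})\le 0$ and $u_n-M_n=\lambda(u_{n-1}-M_n)\ge 0$, both using $0<\lambda<1$ and $u_{n-1}\ge M_n$.

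For the base case $n=2$ we have $M_2=\delta u_1^2=\delta$, and $u_2$ is either $u_1=1$ or $\lambda+(1-\lambda)\delta$; since $0<\delta,\lambda<1$, in both cases $\delta\le u_2\le 1=u_1$, which is exactly $u_1\ge u_2\ge M_2$. For the inductive step, fix $n\ge 3$ and assume the full statement at all stages $m$ with $1<m<n$; in particular $u_0\ge u_1\ge\dots\ge u_{n-1}$ (using $u_0=u_1=1$) and $u_{m-1}\ge M_m$ for $2\le m\le n-1$. It remains to establish $u_{n-1}\ge M_n$, i.e. $u_{n-1}\ge\delta u_iu_{n-i}$ for every $1\le i\le n-1$. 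For $i\in\{1,n-1\}$ this is immediate: since $u_1=1$ and $0<\delta<1$, we get $\delta u_iu_{n-i}=\delta u_{n-1}\le u_{n-1}$. For $2\le i\le n-2$ I would shift one index: from $(i-1)+(n-i)=n-1$ together with $1\le i-1\le n-2$, the stage-$(n-1)$ maximum inequality (part of the hypothesis) gives $u_{n-1}\ge\delta u_{i-1}u_{n-i}$, while the stage-$i$ monotonicity gives $u_{i-1}\ge u_i$; combining, $u_{n-1}\ge\delta u_{i-1}u_{n-i}\ge\delta u_iu_{n-i}$. Taking the maximum over $i$ yields $u_{n-1}\ge M_n$, and the first paragraph then closes stage $n$.

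The only genuinely delicate point is this one-index shift in the auxiliary step, together with the observation that one cannot first prove $(u_n)$ non-increasing in isolation: bounding $u_n$ by $u_{n-1}$ in the second branch already requires $M_n\le u_{n-1}$, which itself rests on both the maximum inequality and the monotonicity at earlier stages. Hence the two inequalities must be propagated together in a single strong induction, and the hypotheses $u_0=u_1$ (which forces $u_k\le 1$ for all $k$) and $0<\delta,\lambda<1$ are precisely what make it start and run.
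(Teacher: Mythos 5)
Your proof is correct and follows essentially the same route as the paper: a single strong induction that carries both inequalities, reduces stage $n$ to the auxiliary fact $u_{n-1}\ge\max_{1\le i\le n-1}\{\delta u_iu_{n-i}\}$, and derives that fact from the stage-$(n-1)$ maximum inequality plus monotonicity via a one-index shift. The only cosmetic difference is that you shift the first factor ($u_{i-1}\ge u_i$) while the paper shifts the second ($u_{n-i-1}\ge u_{n-i}$), which is the same step up to relabeling $i\mapsto n-i$.
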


\begin{proof}
We argue by induction on $n$. If $n=2$, then $u_2=u_1$ or $u_2=\lambda u_1+(1-\lambda)\delta u_1^2$. So $u_1\ge u_2\ge\delta u_1^2$.

For $n>2$, by induction hypothesis, $u_{k-1}\ge u_k\ge\max_{1\le i\le k-1}\{\delta u_iu_{k-i}\}$ for each $2\le k<n$. Thus
$$u_{n-1}\ge\max_{1\le i\le n-2}\{\delta u_iu_{n-i-1}\}\ge\max_{1\le i\le n-2}\{\delta u_iu_{n-i}\}.$$
Note that $u_{n-1}\ge\delta u_{n-1}u_1$, we have $u_{n-1}\ge\max_{1\le i\le n-1}\{\delta u_iu_{n-i}\}$. Then by the definition of $u_n$,
$$u_{n-1}\ge u_n\ge\max_{1\le i\le n-1}\{\delta u_iu_{n-i}\}.$$
\end{proof}

\begin{lemma} \label{reduce}
Let $\beta>\alpha\ge 1,0<\delta<1$ and $\lambda=2^{\alpha-\beta}$. Suppose that $(u_n)_{n<\omega}$ is a sequence as in Lemma \ref{un} and
$\varphi:[0,1]\to[0,\infty)$ is a continuous increasing function with $\varphi(1/2^n)=u_n$ for each $n<\omega$. Set $f(x)=x^\alpha\varphi(x)$ for $x\in[0,1]$.
Then {\bf E}$_f$ is an equivalence relation and
$$\Bbb R^\omega/\ell_1\le_B{\bf E}_f\le_B\Bbb R^\omega/\ell_\beta.$$
\end{lemma}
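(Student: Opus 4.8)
The plan is to verify the three assertions of Lemma~\ref{reduce} in turn: that $\mathbf{E}_f$ is an equivalence relation, that $\mathbb{R}^\omega/\ell_1 \le_B \mathbf{E}_f$, and that $\mathbf{E}_f \le_B \mathbb{R}^\omega/\ell_\beta$. The first is immediate: by Lemma~\ref{un}, the sequence $(u_n)$ satisfies $u_n \ge \max_{1\le i\le n-1}\{\delta u_i u_{n-i}\}$ for all $n>1$, which is exactly the hypothesis of Lemma~\ref{varphi} with this $\delta$; since $\varphi(1/2) = u_1 = 1 > 0$, Lemma~\ref{varphi} tells us $\mathbf{E}_f$ is an equivalence relation (and moreover (A$_1$) holds, which we will want later in the paper). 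So the real content is the two reducibility estimates, and for these the governing fact is the growth rate of $\varphi$ near $0$. The choice $\lambda = 2^{\alpha-\beta} \in (0,1)$ is designed so that the recursion pins $u_n$ between a constant and something comparable to $2^{(\alpha-\beta)n}$: from $u_n \ge \lambda u_{n-1} = 2^{\alpha-\beta} u_{n-1}$ in the nontrivial branch (and $u_n = u_{n-1}$ otherwise) one gets a lower bound $u_n \ge c\, 2^{(\alpha-\beta)n}$ for a suitable constant, while $u_n \le u_{n-1} \le \cdots \le u_1 = 1$ gives the trivial upper bound $u_n \le 1$. Consequently $f(x) = x^\alpha \varphi(x)$ satisfies, roughly, $c' x^\beta \lesssim f(x) \lesssim x^\alpha$ on $[0,1]$ — the first inequality because $\varphi(1/2^n) = u_n \ge c\,2^{(\alpha-\beta)n} = c\,(1/2^n)^{\beta-\alpha}$ and $\varphi$ is increasing, the second because $\varphi \le 1$.

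For $\mathbb{R}^\omega/\ell_1 \le_B \mathbf{E}_f$, I would exploit the bound $f(x) \le x^\alpha\varphi(x) \le x \cdot (x^{\alpha-1}\varphi(x))$ together with the fact that for $x$ small, $x^{\alpha-1}\varphi(x)$ is small (indeed $\le 1$ always, and $\to 0$ as $x\to 0$ when $\alpha>1$; when $\alpha=1$ we instead need $\varphi(x)\to 0$, which follows from $\lambda<1$). The standard device, as in Dougherty--Hjorth and M\'atrai, is to rescale: find a sequence $\e_n \downarrow 0$ fast enough that the map $\theta((x_n)_n) = (\e_n x_n)_n$ (after truncating each coordinate into $[0,1]$, or rather into $[-1,1]$ and working with the obvious variant of $\mathbf{E}_f$ on $[-1,1]^\omega$, or precomposing with a homeomorphism $\mathbb{R}\to(0,1)$) sends $\ell_1$-equivalent sequences to $\mathbf{E}_f$-equivalent ones and conversely. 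The forward direction wants $\sum_n f(\e_n|y_n-x_n|) < \infty$ whenever $\sum_n |y_n - x_n| < \infty$, which holds once $f(\e_n t) \le \e_n t$ uniformly, i.e. once $\e_n$ is chosen with $\e_n^{\alpha-1}\varphi(\e_n) \le 1$ — automatic — but one really needs a quantitative summable gain, so one picks $\e_n$ decreasing so that $f(\e_n t)\le 2^{-n}$-ish for bounded $t$ is false in general; instead the right statement is $f(\e_n t) \le \e_n\cdot t$ which already suffices since then $\sum f(\e_n|y_n-x_n|)\le \sum \e_n|y_n-x_n| \le (\sup_n\e_n)\sum|y_n-x_n|<\infty$. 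For the converse (pulling back non-equivalence) one uses that $f(x) \ge c' x^\beta \ge c' x^{\beta}$ and, on the relevant compact range, $f(x)\ge c'' x$ fails — so here one must instead invoke the general principle that if $\sum f(\e_n|z_n|)<\infty$ with $\e_n\downarrow 0$ then $|z_n|\to\infty$ slowly enough that... Actually the clean route is: $\mathbb{R}^\omega/\ell_1$ is Borel reducible to $\mathbf{E}_f$ for \emph{any} $f$ satisfying (R$_1$),(R$_2$) with $f$ not identically $0$ and $\liminf_{x\to 0} f(x)/x^{?}$ controlled; M\'atrai and Dougherty--Hjorth prove exactly such lemmas, and the hypothesis we have ($f(x)\le x^\alpha\le x$ near $0$, $\alpha\ge 1$) is what makes $\ell_1$ reduce in. I would cite/adapt that argument rather than reprove it.

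For $\mathbf{E}_f \le_B \mathbb{R}^\omega/\ell_\beta$, the key inequality is $f(x) = x^\alpha\varphi(x) \ge c' x^\beta$, equivalently $\varphi(x) \ge c' x^{\beta-\alpha}$, which I established above from the lower bound $u_n \ge c\,2^{(\alpha-\beta)n}$ driven by $\lambda = 2^{\alpha-\beta}$. Granting this, $\sum_n f(|y_n-x_n|) < \infty$ forces $\sum_n |y_n-x_n|^\beta < \infty$, so the identity-type map (again composed with a homeomorphism $\mathbb{R}\to[0,1]$ coordinatewise, or its inverse, to land in the right spaces) witnesses $\mathbf{E}_f \le_B \mathbb{R}^\omega/\ell_\beta$; but one also needs the reverse implication for the reduction, i.e. $\sum|y_n-x_n|^\beta<\infty \Rightarrow \sum f(|y_n-x_n|)<\infty$, which requires a bound $f(x)\le C x^\beta$ — and that is \emph{false} since $f(x)\approx x^\alpha$ with $\alpha<\beta$ for $x$ near $1$. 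The resolution, exactly as in M\'atrai's treatment, is again to rescale the coordinates by a sequence $\e_n\downarrow 0$: define $\theta((x_n)_n) = (\e_n \cdot h(x_n))_n$ where $h:\mathbb{R}\to[0,1]$ is a fixed homeomorphism onto an interval, choosing $\e_n$ so that on the range $[0,\e_n]$ we have $f(t) \le C\, t^\beta$ (possible because near $0$, $f(t)=t^\alpha\varphi(t)\ge c' t^\beta$ \emph{and} $f(t) = t^\alpha\varphi(t) \le t^\alpha \le$ nothing good — wait, we need the \emph{upper} bound $f(t)\le C t^\beta$ near $0$, which needs $\varphi(t)\le C t^{\beta-\alpha}$, i.e. $u_n \le C 2^{(\alpha-\beta)n}$, which is NOT what the recursion gives in the $u_n = u_{n-1}$ branch). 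Here is the actual mechanism: choose the indices at which $u_n = u_{n-1}$ to be sparse, or rather, note that we do not need $f(t)\le Ct^\beta$ globally — we need it only along the \emph{values} $\e_n h(x_n)$ taken by a single sequence, and since for the reduction we may further precompose with any Borel bijection of $\mathbb{R}^\omega$, the standard trick is to spread each real coordinate over infinitely many coordinates of the target (a "direct sum" argument) so that the relevant scale becomes as small as we like. I would follow M\'atrai's proof of the analogous half of his reducibility results verbatim, with our explicit $c', c$ from the $\lambda=2^{\alpha-\beta}$ bound plugged in. \textbf{The main obstacle} is precisely this last point: getting a genuine Borel reduction $\mathbf{E}_f \to \mathbb{R}^\omega/\ell_\beta$ (not just a one-directional implication) despite $f$ growing only like $x^\alpha$ with $\alpha<\beta$ away from the origin — handling this requires the coordinate-spreading/rescaling construction and a careful summability bookkeeping, whereas the equivalence-relation claim and the $\ell_1$-into-$\mathbf{E}_f$ direction are comparatively routine given Lemmas~\ref{varphi} and~\ref{un}.
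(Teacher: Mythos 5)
Your first step (that $\mathbf{E}_f$ is an equivalence relation, via Lemma~\ref{un} feeding the hypothesis of Lemma~\ref{varphi}) is correct and is exactly what the paper does. The two reducibility claims, however, are not actually proved: in both cases you identify an obstacle and then defer to ``adapting M\'atrai/Dougherty--Hjorth verbatim'' without locating the statement or mechanism that does the work, and in the $\ell_1$ direction the mechanism you do sketch would fail. The rescaling map $(x_n)\mapsto(\e_n x_n)$ gives the forward implication from $f(t)\le t$, but the backward implication is false for it: since $f(x)$ can be as small as $x^\beta$ with $\beta>1$ near $0$, taking $|x_n-\hat x_n|=1/n$ gives $\sum|x_n-\hat x_n|=\infty$ while $\sum f(\e_n/n)$ can converge. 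The paper's device is not rescaling but \emph{repetition}: pick $c_n$ with $0<f(c_n)<2^{-n}$ and encode the coordinate $x_n$ as a block of roughly $[|x_n|/f(c_n)]$ coordinates all equal to $c_n$ (separate blocks for the positive and negative parts). Two such blocks then differ in about $|x_n-\hat x_n|/f(c_n)$ places, each contributing $f(c_n)$, so the block's contribution to $\sum f(|y_m-\hat y_m|)$ is $|x_n-\hat x_n|$ up to an error $2^{-n+1}$ --- an equality in both directions, which is what a reduction needs.

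For $\mathbf{E}_f\le_B\R^\omega/\ell_\beta$ you correctly observe that the identity-type map cannot work because $f(x)\le Cx^\beta$ fails away from $0$, but the resolution is not ``coordinate spreading with careful bookkeeping'' in the abstract: the paper invokes a specific sufficient criterion (Theorem~16 of M\'atrai), namely the existence of $\kappa:\{1/2^i\}\to[0,1]$ and $L\ge1$ with $f(1/2^n)=\sum_{i=0}^n\bigl(\kappa(1/2^i)/2^{n-i}\bigr)^\beta$ together with two domination conditions. The identity forces $\kappa(1/2^n)^\beta=f(1/2^n)-f(1/2^{n-1})/2^\beta=(u_n-\lambda u_{n-1})/2^{n\alpha}$, and here is the real role of $\lambda=2^{\alpha-\beta}$, which your reading misses: it is chosen precisely so that $f(1/2^{n-1})/2^\beta=\lambda u_{n-1}/2^{n\alpha}$, and the recursion of Lemma~\ref{un} guarantees $u_n-\lambda u_{n-1}\ge 0$ (so $\kappa$ is well defined) and supplies the estimate $u_n-\lambda u_{n-1}\le(1-\lambda)u_{n-2}\le(u_{n-1}-\lambda u_{n-2})/\delta$ needed for the condition $\kappa(1/2^n)\le L\kappa(1/2^{n-1})/2$. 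Your use of $\lambda$ only to derive the crude lower bound $u_n\ge 2^{(\alpha-\beta)n}$ (hence $f(x)\gtrsim x^\beta$) is true but is not the inequality the reduction runs on, and no amount of rescaling recovers the missing exact decomposition. As written, both reductions remain unproved.
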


\begin{proof}
(1) From Lemma \ref{un}, we have
$$\varphi(1/2^n)=u_n\ge\max_{1\le i\le n-1}\{\delta u_iu_{n-i}\}=\max_{1\le i\le n-1}\{\delta\varphi(1/2^i)\varphi(1/2^{n-i})\}.$$
Thus by Lemma \ref{varphi}, {\bf E}$_f$ is an equivalence realtion.

(2) Fix a bijection $\langle\cdot,\cdot,\cdot\rangle:\{0,1\}\times\omega\times\omega\to\omega$.
For each $n\in\omega$, find a $c_n\in[0,1]$ such that $0<f(c_n)<1/2^n$.
We define $\theta_1:\Bbb R^\omega\to[0,1]^\omega$ by, for $(x_n)_{n<\omega}\in\Bbb R^\omega$, setting $\theta_1((x_n)_{n<\omega})=(y_m)_{m<\omega}$ with
$$\begin{array}{ll}y_m=c_n\iff &m=\langle 0,n,k\rangle,x_n\ge 0,k<[x_n/f(c_n)],\cr
&\mbox{ or }m=\langle 1,n,k\rangle,x_n<0,k<[-x_n/f(c_n)],\end{array}$$
and $y_m=0$ otherwise. It is easy to check that $\theta_1$ is Borel. For $(x_n)_{n<\omega},(\hat x_n)_{n<\omega}\in\Bbb R^\omega$, if
$\theta_1((x_n)_{n<\omega})=(y_m)_{m<\omega},\theta_1((\hat x_n)_{n<\omega})=(\hat y_m)_{m<\omega}$, we have
$$|x_n-\hat x_n|-1/2^{n-1}<\sum f(|y_m-\hat y_m|)<|x_n-\hat x_n|+1/2^{n-1},$$
where $\sum$ ranges over $\{m=\langle i,k,n\rangle:y_m\ne\hat y_m,k<\omega,i=0,1\}$. Thus
$$\sum_{n<\omega}|x_n-\hat x_n|<\infty\iff\sum_{m<\omega}f(|y_m-\hat y_m|)<\infty.$$
Therefore, $\theta_1$ witnesses that $\Bbb R^\omega/\ell_1\le_B{\bf E}_f$.

(3) For proving ${\bf E}_f\le_B\Bbb R^\omega/\ell_\beta$, by Theorem 16 of \cite{matrai}, we need only to find a function $\kappa:\{1/2^i:i<\omega\}\to[0,1]$ and
$L\ge 1$ satisfying that, for each $n<\omega$,
\begin{enumerate}
\item[(i)] $f(1/2^n)=\sum_{i=0}^n(\kappa(1/2^i)/2^{n-i})^\beta$;
\item[(ii)] $\sum_{i=n}^\infty\kappa(1/2^i)^\beta\le L\sum_{i=0}^n(\kappa(1/2^i)/2^{n-i})^\beta$;
\item[(iii)] $\kappa(1/2^n)\le L\cdot\max\{\kappa(1/2^i)/2^{n-i}:i<n\}$.
\end{enumerate}

To satisfy (i), we shall let $\kappa(1)=f(1)=u_0=1$ and, for $n>0$,
$$\kappa(1/2^n)^\beta=f(1/2^n)-f(1/2^{n-1})/2^\beta=(u_n-\lambda u_{n-1})/2^{n\alpha}.$$
Note that $u_1-\lambda u_0=1-\lambda\in[0,1]$ and, for $n>1$,
$$(1-\lambda)u_{n-1}\ge u_n-\lambda u_{n-1}\ge(1-\lambda)\max_{1\le i\le n-1}\{\delta u_iu_{n-i}\},$$
so $u_n-\lambda u_{n-1}\in[0,1]$. We see that $\kappa(1/2^n)$ is well defined.

Let $L=\max\{\sum_{k=0}^\infty 2^{-k\alpha},2,(\delta 2^\alpha)^{-1/\beta}\}$.

By the definition of $\kappa$, we have $\kappa(1/2^n)^\beta\le f(1/2^n)=\varphi(1/2^n)/2^{n\alpha}$. Hence
$$\sum_{i=n}^\infty\kappa(1/2^i)^\beta\le\sum_{i=n}^\infty\varphi(1/2^i)/2^{i\alpha}\le\sum_{i=n}^\infty\varphi(1/2^n)/2^{i\alpha}
=f(1/2^n)\sum_{i=n}^\infty\frac{1}{2^{(i-n)\alpha}}.$$
From (i), we know (ii) is satisfied.

For (iii), if $n=1$, then $\kappa(1/2)\le 1\le L\kappa(1)/2$.

Note that for each $n>1$, we have
$$\begin{array}{ll}\kappa(1/2^n)^\beta&=(u_n-\lambda u_{n-1})/2^{n\alpha}\le(1-\lambda)u_{n-1}/2^{n\alpha}\cr
&\le(1-\lambda)u_{n-2}/2^{n\alpha}\le(1-\lambda)\max_{1\le i\le n-2}\{\delta u_iu_{n-i}\}/(\delta 2^{n\alpha})\cr
&\le(u_{n-1}-\lambda u_{n-2})/(\delta 2^{n\alpha})\cr
&=\kappa(1/2^{n-1})^\beta/(\delta 2^\alpha).\end{array}$$
Then (iii) follows from $\kappa(1/2^n)\le L\kappa(1/2^{n-1})/2$.
\end{proof}

\begin{theorem}
For any $\beta>1$, there is a set of continuous function
$$\{f_\xi:[0,1]\to\Bbb R^+:\xi\in\{0,1\}^\omega\}$$
such that each {\bf E}$_{f_\xi}$ is equivalence relation with
$\Bbb R^\omega/\ell_1\le_B{\bf E}_{f_\xi}\le_B\Bbb R^\omega/\ell_\beta$, and
for and distinct $\xi,\zeta\in\{0,1\}^\omega$, we have {\bf E}$_{f_\xi}$ and {\bf E}$_{f_\zeta}$ are Borel incomparable.
\end{theorem}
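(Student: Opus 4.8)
The plan is to reuse everything set up in Sections 2 and 3. Fix $\alpha=1$ (any $\alpha\in[1,\beta)$ works), put $\lambda=2^{\alpha-\beta}\in(0,1)$, fix once and for all a small $\delta\in(0,1)$, set $\rho=\lambda+(1-\lambda)\delta\in(0,1)$, and for each $\xi\in\{0,1\}^\omega$ produce a sequence $(u^\xi_n)_{n<\omega}$ of the type described in Lemma \ref{un}. Let $\varphi_\xi:[0,1]\to[0,\infty)$ be the continuous increasing function that is affine on each $[1/2^{n+1},1/2^n]$ with $\varphi_\xi(1/2^n)=u^\xi_n$, and put $f_\xi(x)=x^\alpha\varphi_\xi(x)$. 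By Lemma \ref{reduce} each $\mathbf E_{f_\xi}$ is an equivalence relation with $\mathbb R^\omega/\ell_1\le_B\mathbf E_{f_\xi}\le_B\mathbb R^\omega/\ell_\beta$, and by Lemma \ref{varphi} condition (A$_1$) holds for every $\varphi_\xi$ (note $\varphi_\xi(1/2)=u^\xi_1=1>0$ and $\varphi_\xi>0$ on $(0,1]$ since $u^\xi_n>0$). Hence, applying Theorem \ref{inreduce} and Remark \ref{incomparable} in both directions ($g=f_\zeta,f=f_\xi$ and then $g=f_\xi,f=f_\zeta$), the whole theorem reduces to the purely sequential statement that for all distinct $\xi,\zeta\in\{0,1\}^\omega$,
$$\liminf_{n\to\infty}\frac{u^\xi_n}{u^\zeta_n}=0\qquad\text{and}\qquad\liminf_{n\to\infty}\frac{u^\zeta_n}{u^\xi_n}=0.$$

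To build the sequences I would first record the two exact facts I can rely on. A \emph{stay} at step $n$ ($u^\xi_n=u^\xi_{n-1}$) freezes the value; a \emph{drop} at step $n$ satisfies $u^\xi_n\ge\lambda u^\xi_{n-1}+(1-\lambda)\delta u^\xi_1 u^\xi_{n-1}=\rho u^\xi_{n-1}$ (take $i=1$ in the maximum and use $u^\xi_1=1$), so if $d^\xi_n$ denotes the number of drops among $\{2,\dots,n\}$ then $u^\xi_n\ge\rho^{d^\xi_n}$ always, while $u^\xi_n$ is non-increasing. I would therefore encode $\xi$ through the placement of its stay-runs. Fix a partition of $\{2,3,\dots\}$ into consecutive finite blocks $B_0,B_1,B_2,\dots$ with $|B_k|\to\infty$ extremely fast, and a labelling $k\mapsto(i_k,s_k)\in\omega\times\{+,-\}$ such that every pair $(i,+)$ and $(i,-)$ is used for infinitely many $k$. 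Prepend a fixed long initial block of drops (the same for all $\xi$) so that the coding only begins once $u$ has already been driven well below $1$; and on each coding block $B_k$ declare that the $\xi$-sequence \emph{stays throughout $B_k$} when $s_k=+$ and $\xi(i_k)=0$, or $s_k=-$ and $\xi(i_k)=1$, and \emph{drops throughout $B_k$} otherwise. All of this produces a legitimate sequence of the form allowed by Lemma \ref{un}, and $\xi\mapsto f_\xi$ is injective since distinct $\xi$ differ on infinitely many blocks.

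The combinatorial heart is then that for $\xi\ne\zeta$ the quantity $D_n:=d^\xi_n-d^\zeta_n$ has $\limsup D_n=+\infty$ and $\liminf D_n=-\infty$: on blocks labelled by a coordinate where $\xi,\zeta$ agree the two sequences make identical moves, whereas on a block $B_k$ labelled by a coordinate at which they differ one sequence drops and the other stays throughout $B_k$, so $D_n$ changes by $\pm|B_k|$ across $B_k$; since such coordinates are consulted with both signs infinitely often and $|B_k|$ grows fast enough that a single block dwarfs the total length of all earlier blocks, the value of $D_n$ at successive block endpoints overshoots every bound and also falls below its negative. Together with $u^\xi_n\ge\rho^{d^\xi_n}$ (and $u^\zeta_n\le1$, refined below) this is what will force both $\liminf$s to vanish, evaluated at the right endpoints $n$ of blocks on which one of the two sequences drops throughout while the other stays throughout.

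The step I expect to be the real obstacle is bounding $u^\xi_n$ from \emph{above} along those block endpoints — equivalently, checking that a long run of drops genuinely decreases $u^\xi$, not merely that it is allowed to. Because of the maximum term, an earlier plateau of $u^\xi$ at height $v$ creates a ``floor'': subsequent drops cannot push $u^\xi$ below roughly $\delta v^2$ near indices of size $\approx 2\cdot(\text{plateau end})$, and iterating across doublings of the index one gets floors of order $\delta^3v^4,\delta^7v^8,\dots$; the best one can hope is that a long drop-block degrades $u^\xi$ geometrically at some rate $\rho'=\rho'(\delta)<1$ rather than at the ideal rate $\rho$. To push the estimate through I would (i) begin the coding only after $u$ has descended well below $1$, so that the only plateaus that ever occur sit at small heights and their floors are negligible relative to the comparisons being made; (ii) choose the block lengths $|B_k|$ growing fast enough — say $|B_{k+1}|$ larger than an exponential of $\sum_{j\le k}|B_j|$ — that the drops carried out inside one block $B_k$ both overwhelm the ratio $u^\xi_n/u^\zeta_n$ accumulated over $B_0,\dots,B_{k-1}$ and overwhelm any accumulated memory degradation; (iii) if necessary take $\delta$ small at the outset to keep $\rho'$ safely below $1$. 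Granting this control, evaluating $u^\xi_n/u^\zeta_n$ (resp. $u^\zeta_n/u^\xi_n$) at the endpoints of blocks on which $\xi$ drops throughout and $\zeta$ stays throughout (resp. the reverse), and using the lower bound $u^\zeta_n\ge\rho^{d^\zeta_n}$ (resp. $u^\xi_n\ge\rho^{d^\xi_n}$) for the frozen sequence, yields both $\liminf=0$ statements, and the theorem follows.
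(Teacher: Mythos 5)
Your overall architecture is the same as the paper's: fix $\alpha\in[1,\beta)$ and $\lambda=2^{\alpha-\beta}$, generate each $\varphi_\xi$ from a stay/drop recursion of the type in Lemma \ref{un}, invoke Lemmas \ref{varphi} and \ref{reduce} for the equivalence-relation and reducibility claims, and reduce incomparability to condition (A$_2$)' of Remark \ref{incomparable} applied in both directions. That reduction is correct. But there is a genuine gap exactly where you flag one: you never establish the upper bound showing that a run of drops actually forces $u^\xi_n$ far below the frozen value $u^\zeta_n$; your final paragraph says ``granting this control'' and stops. Worse, the bookkeeping you set up beforehand measures the wrong thing: the drop count $d^\xi_n$ (and the quantity $D_n=d^\xi_n-d^\zeta_n$) does not control the value from above. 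Since a drop step satisfies $u_n\ge(1-\lambda)\delta\,u_{\lceil n/2\rceil}u_{\lfloor n/2\rfloor}$, a run of $N\le m$ consecutive drops starting at index $m$ cannot push the value below $(1-\lambda)\delta\,u_m^2$ no matter how large $N$ is; the decay is governed by the number of \emph{doublings of the index} spanned by the run, not by the number of drop steps. So $\limsup D_n=+\infty$ and $\liminf D_n=-\infty$ prove nothing by themselves, and the additive phrasing (``$D_n$ changes by $\pm|B_k|$ across $B_k$,'' ``a single block dwarfs the total length of all earlier blocks'') is the wrong currency.

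The missing estimate is short and is the heart of the paper's proof: if step $2n$ is a drop then, since $u$ is decreasing with $u_1=1$, one has $u_iu_{2n-i}\le u_n$ for $1\le i\le 2n-1$ and $u_{2n-1}\le u_n$, whence $u_{2n}\le\lambda u_n+(1-\lambda)\delta u_n=\delta'u_n$ with $\delta'=\lambda+(1-\lambda)\delta<1$, and therefore $u_{2^mn}\le(\delta')^mu_n\to 0$ along any drop run that keeps doubling the index. With this in hand your plan does close up: your requirement that $|B_{k+1}|$ exceed an exponential of $\sum_{j\le k}|B_j|$ makes the endpoint ratios $N_{k+1}/N_k$ grow fast enough that $(\delta')^{\log_2(N_{k+1}/N_k)}$ beats the crude lower bound $u^\zeta_{N_k}\ge\rho^{N_k}$ for the frozen sequence. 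But none of this is carried out, and the floors $\delta v^2,\delta^3v^4,\dots$ you compute are not the binding obstruction --- the $\delta'$-per-doubling rate is. The paper avoids all quantitative block bookkeeping by choosing phase lengths adaptively: at level $l$ each $s\in\{0,1\}^l$ in turn drops while the other $2^l-1$ sequences are frozen, and since the dropping value tends to $0$ by the doubling estimate while the frozen values are constant, one simply continues until the ratio falls below $2^{-l}$ and records that index as $n_s$. You should either prove the doubling estimate and then verify your growth condition on the $|B_k|$, or switch to the adaptive ``drop until separated'' scheme, which needs no advance choice of block lengths.
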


\begin{proof}
Fix a $0<\delta<1$ and a $1\le\alpha<\beta$. Let $\lambda=2^{\alpha-\beta}$. For $s\in\{0,1\}^{<\omega}$, we denote by lh$(s)$ the length of $s$.
We are going to construct a finite decreasing sequence $w_s\in[0,1]^{<\omega}$, a natural number $n_s<\omega$
for every $s\in\{0,1\}^{<\omega}$, and a sequence of natural numbers $k_0<k_1<k_2<\cdots$, satisfying the following list of requirements.
\begin{enumerate}
\item[(a)] If lh$(s)=l$, then lh$(w_s)=k_l$.
\item[(b)] If $t|l=s$, then $w_t|k_l=w_s$.
\item[(c)] If lh$(s)=$lh$(t)=l,s\ne t$, then $k_{l-1}\le n_s<k_l$ and
$$w_s(n_s)/w_t(n_s)<1/2^l.$$
\end{enumerate}

Construct by induction on lh$(s)$. Firstly, let $k_0=2$, $w_\emptyset(0)=w_\emptyset(1)=1$ and $n_\emptyset=1$. Assume that $k_0<k_1<\cdots<k_{l-1}$
and for all lh$(s)<l$, $w_s,n_s$ have been defined. For lh$(s)=l$ and $n<k_{l-1}$, set $w_s(n)=w_{s|(l-1)}(n)$.

We enumerate $\{0,1\}^l$ by $s_1,s_2,\cdots,s_M\,(M=2^l)$.  Let $n_{s_1}$ be a sufficiently large number specified later,
for $s\in\{0,1\}^l,k_{l-1}\le n\le n_{s_1}$, we define
$$w_s(n)=\left\{\begin{array}{ll}\lambda w_s(n-1)+(1-\lambda)\max_{1\le i\le n-1}\{\delta w_s(i)w_s(n-i)\}, &s=s_1,\cr w_s(n-1), &s\ne s_1.\end{array}\right.$$
From Lemma \ref{un}, we see that $w_s$ is decreasing. Note that $w_{s_1}(i)w_{s_1}(2n-i)\le w_{s_1}(n)$ for $1\le i\le 2n-1$, we have
$$w_{s_1}(2n)\le\lambda w_{s_1}(n)+(1-\lambda)\delta w_{s_1}(n)=\delta'w_{s_1}(n),$$
in which $\delta'=\lambda+(1-\lambda)\delta<1$. Hence $w_{s_1}(2^mn)\le(\delta')^mw_{s_1}(n)\to 0\,(m\to\infty)$.
We can find a sufficient large $n_{s_1}$ such that, for $s_1\ne s\in\{0,1\}^l$,
$$w_{s_1}(n_{s_1})/w_s(n_{s_1})<1/2^l.$$
Follow the same method, we can find $n_{s_1}<n_{s_2}<\cdots<n_{s_M}$ such that, for $j=2,\cdots,M$ and $n_{s_{j-1}}<n\le n_{s_j}$,
$$w_s(n)=\left\{\begin{array}{ll}\lambda w_s(n-1)+(1-\lambda)\max_{1\le i\le n-1}\{\delta w_s(i)w_s(n-i)\}, &s=s_j,\cr w_s(n-1), &s\ne s_j.\end{array}\right.$$
Furthermore, for $s_j\ne s\in\{0,1\}^l$ we have
$$w_{s_j}(n_{s_j})/w_s(n_{s_j})<1/2^l.$$
Letting $k_l=n_{s_M}+1$, we finish the construction at level $l$.

For every $\xi\in\{0,1\}^\omega$, we fix a continuous increasing function $\varphi_\xi:[0,1]\to\Bbb R^+$ such that $\varphi_\xi(1/2^n)=w_{\xi|l}(n)$ for $l<\omega,n<k_l$.
Define $f_\xi(x)=x^\alpha\varphi_\xi(x)$ for $x\in[0,1]$. From Lemma \ref{reduce}, {\bf E}$_{f_\xi}$ is equivalence relation, and
$$\Bbb R^\omega/\ell_1\le_B{\bf E}_{f_\xi}\le_B\Bbb R^\omega/\ell_\beta.$$

By Lemma \ref{varphi}, condition (A$_1$) in Theorem \ref{inreduce} holds for every $\varphi_\xi$. If $\xi\ne\zeta$, then there exists $m$ such that $\xi(m)\ne\zeta(m)$.
Let $l>m,s=\zeta|l,t=\xi|l$, we have $s\ne t$. Then
$$\varphi_\zeta(1/2^{n_s})/\varphi_\xi(1/2^{n_s})=w_s(n_s)/w_t(n_s)<1/2^l.$$
We see that condition (A$_2$)' holds. By Remark \ref{incomparable}, we have {\bf E}$_{f_\zeta}\not\le_B${\bf E}$_{f_\xi}$.
\end{proof}

\begin{remark}
Let $1<\alpha<\beta$, we do not know whether there exist Borel functions $f,g:[0,1]\to\Bbb R^+$ such that {\bf E}$_f$,{\bf E}$_g$ are Borel incomparable
equivalence relations with $\Bbb R^\omega/\ell_\alpha\le_B{\bf E}_f,{\bf E}_g\le_B\Bbb R^\omega/\ell_\beta$.
\end{remark}


\begin{thebibliography}{99}

\bibitem{DH} R. Dougherty and G. Hjorth, \textit{Reducibility and
nonreducibility between $\ell^p$ equivalence relations}, Trans.
Amer. Math. Soc. 351 (1999) 1835--1844.

\bibitem{gao} S. Gao, \textit{Equivalence relations and classical Banach spaces}, Mathematical logic in Asia, World Sci. Publ., Hackensack, NJ, (2006), 70--89.

\bibitem{gaobook} S. Gao, Invariant Descriptive Set Theory,
Monographs and Textbooks in Pure and Applied Mathematics, vol. 293, CRC Press, 2008.

\bibitem{kechris} A. S. Kechiris, Classical Descriptive Set Theory, Graduate Texts in Mathematics, vol. 156, Springer-Verlag, 1995.

\bibitem{matrai} T. M\' atrai, \textit{On $\ell_p$-like equivalence relations}, Real Anal. Exchange 34 (2008/09), no. 2, 377--412.

\end{thebibliography}
\end{document}